\newcommand\blfootnote[1]{%
  \begingroup
  \renewcommand\thefootnote{}\footnote{#1}%
  \addtocounter{footnote}{-1}%
  \endgroup
}
\newtheorem{theorem}{Theorem}[section]
\numberwithin{equation}{section}
\newtheorem{definition}[theorem]{Definition}
\patchcmd{\@maketitle}{\begin{center}}{\begin{flushleft}}{}{}
\patchcmd{\@maketitle}{\begin{tabular}[t]{c}}{\begin{tabular}[t]{@{}l}}{}{}
\patchcmd{\@maketitle}{\end{center}}{\end{flushleft}}{}{}
\begin{document}

\title{On the resummation of series of fuzzy numbers via generalized Dirichlet and generalized factorial series}
\author{Enes Yavuz}
\date{{\small Department of Mathematics, Manisa Celal Bayar University, Manisa, Turkey.\\ E-mail: enes.yavuz@cbu.edu.tr}}
\maketitle
\thispagestyle{titlepage}
\blfootnote{\emph{Key words and phrases:} series of fuzzy numbers, Tauberian theorems, fuzzy Fourier series\\\rule{0.63cm}{0cm}\emph{\!\!Mathematics Subject Classification:} 03E72, 40A05, 40E05}

\noindent\textbf{Abstract:}
\noindent We introduce semicontinuous summation methods for series of fuzzy numbers and give Tauberian conditions under which summation of a series of fuzzy numbers via generalized Dirichlet series and via generalized factorial series implies its convergence. Besides, we define the concept of level Fourier series of fuzzy valued functions and obtain results concerning the summation of level Fourier series.

\section{Introduction}\allowdisplaybreaks
\noindent Following their invention, Dirichlet and factorial series have fascinated many researchers  and found applications in various fields of mathematics. Introduced primarily in real variables to answer combinatorial questions, Dirichlet series have played important roles in analytic number theory. The Riemann zeta function $\zeta(s)$ is represented by Euler product over primes and has deep relation with prime number theorem which is, in fact, equivalent to the fact that there are no zeros of the zeta function on the line $Re(s)=1$. Dirichlet series and in particular the Riemann zeta function serve also as a tool for number theorists to study behaviors of arithmetic functions and to derive related identities\cite{arithmetic1,arithmetic2}. Beside the usage in analytic number theory, they arise as expansions of functions and as solutions of differential equations recently\cite{dsolut1, dsolut2,dsolut3,dsolut4}. On the other hand, factorial series are studied with regard to their analytical properties and used in transformation of series and in expansions of functions. Authors converted power series to factorial series of more rapid convergence and given factorial series expansions for some functions\cite{factor1,factor2,factor3,factor4}. There are also studies concerning factorial series solutions of difference equations\cite{factdiff1,factdiff2,factdiff3}. In connection with the context of the paper, both series are also utilized in regularization of divergent series\cite{factor2,factor4,factdiff2,sum1,sum2}.

As extension of classical sets, fuzzy sets are introduced by Zadeh\cite{zadeh} to represent imprecise knowledge which is hard to handle by using classical sets. Since its introduction, researchers used fuzzy sets as a smart mathematical tool for modeling real-world problems and achieved satisfactory results. In the meantime, theoretical basis of the concept have also developed and many concepts in classical mathematics are extended to fuzzy mathematics. In particular, sequences and series of fuzzy numbers are introduced and corresponding convergence properties are investigated\cite{seq1,seq2,seq3}. Furthermore, authors introduced summability methods to handle series of fuzzy numbers which fail to converge in fuzzy number space and given various Tauberian conditions to achieve the convergence\cite{ek1,ek2,ek3,abel,ek4,ekyavuz,ekdutta,eket}. Following the achievements in the literature we now introduce a general summation method for series of fuzzy numbers and obtain Tauberian conditions under which summation of a series of fuzzy numbers via generalized Dirichlet and generalized factorial series imply its ordinary sum. Besides, we define the concept of level Fourier series of fuzzy valued functions and prove that level Fourier series of fuzzy valued functions are summable via generalized Dirichlet series to the original function. Results for some particular cases of the general summation method are also deduced as corollaries.
\section{Preliminaries}
\noindent A \textit{fuzzy number} is a fuzzy set on the real axis, i.e. u is normal, fuzzy convex, upper semi-continuous and $\operatorname{supp}u =\overline{\{t\in\mathbb{R}:u(t)>0\}}$ is compact \cite{zadeh}.
$\mathbb{R}_{\mathcal{F}}$ denotes the space of fuzzy numbers. \textit{$\alpha$-level set} $[u]_\alpha$
is defined by
\begin{eqnarray*}
[u]_\alpha:=\left\{\begin{array}{ccc}
\{t\in\mathbb{R}:u(t)\geq\alpha\}&& \ \textrm{if}\quad 0<\alpha\leq 1 \\[6 pt]
\overline{\{t\in\mathbb{R}:u(t)>\alpha\}}&&\!\!\!\!\!\!\textrm{if}\quad \alpha=0.\end{array} \right.
\end{eqnarray*}
$r\in\mathbb{R}$ may be seen as a fuzzy number $\overline{r}$ defined by
\begin{eqnarray*}
\overline{r}(t):=\left\{\begin{array}{ccc}
1 && \ \textrm{if} \quad t=r \\
0 && \ \ \textrm{if} \quad t\neq r.\end{array} \right.
\end{eqnarray*}
\begin{theorem}[Representation Theorem]\label{rep}\cite{gv}
Let $[u]_\alpha\!\!=\![u^-(\alpha),u^+(\alpha)]$ for $u\in
\mathbb{R}_{\mathcal{F}}$ and for each $\alpha\in[0,1]$. Then, the following statements
hold:

(i) $u^-(\alpha)$ is a bounded and non-decreasing left continuous
function on $(0,1]$.

(ii) $u^+(\alpha)$ is a bounded and non-increasing left continuous
function on $(0,1]$.

(iii) The functions $u^-(\alpha)$ and $u^+(\alpha)$ are right
continuous at the point $\alpha=0$.

(iv) $u^-(1)\leq u^+(1)$.

\noindent Conversely, if the pair of
functions $\gamma$ and $\beta$ satisfies the conditions (i)-(iv),
then there exists a unique $u\in \mathbb{R}_{\mathcal{F}}$ such that
$[u]_\alpha:=[\gamma(\alpha),\beta(\alpha)]$ for each
$\alpha\in[0,1]$. The fuzzy number $u$ corresponding to the pair of
functions $\gamma$ and $\beta$ is defined by $u:\mathbb{R}\to[0,1]$,
$u(t):=\sup\{\alpha:\gamma(\alpha)\leq t\leq \beta(\alpha)\}$.
\end{theorem}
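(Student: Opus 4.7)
The plan is to prove both implications separately, starting with the forward direction that properties (i)--(iv) hold for any $u\in\mathbb{R}_{\mathcal{F}}$. First I would argue that every $\alpha$-level set $[u]_\alpha$ is a closed bounded interval, so that $u^-(\alpha)$ and $u^+(\alpha)$ are well-defined real numbers: compactness follows from upper semi-continuity together with compactness of the support (for $\alpha>0$) or from the closure in the defining formula (for $\alpha=0$), and the interval structure comes from fuzzy convexity. Boundedness of $u^\pm$ on $[0,1]$ is then immediate from $[u]_\alpha\subseteq[u]_0=\overline{\operatorname{supp}u}$, monotonicity of $u^-,u^+$ follows from the nesting $[u]_{\alpha_2}\subseteq[u]_{\alpha_1}$ whenever $\alpha_1\leq\alpha_2$, and (iv) follows from normality, which guarantees $[u]_1\neq\emptyset$.

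The delicate step of the forward direction is the continuity claim. For $\alpha_n\uparrow\alpha\in(0,1]$ I would establish the set identity $[u]_\alpha=\bigcap_n[u]_{\alpha_n}$: inclusion $\subseteq$ is trivial, while for $\supseteq$ any $t$ in the intersection satisfies $u(t)\geq\alpha_n$ for all $n$, hence $u(t)\geq\alpha$. Because $[u]_{\alpha_n}$ is a decreasing sequence of compact intervals converging in the Hausdorff sense to its intersection, the endpoints converge, giving $u^-(\alpha_n)\to u^-(\alpha)$ and $u^+(\alpha_n)\to u^+(\alpha)$. Right continuity at $\alpha=0$ is handled analogously using $\bigcup_{\alpha>0}[u]_\alpha=\{t:u(t)>0\}$ and then passing to the closure.

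For the converse, given $\gamma,\beta$ satisfying (i)--(iv), I would define $u(t):=\sup\{\alpha:\gamma(\alpha)\leq t\leq\beta(\alpha)\}$ (with $\sup\emptyset:=0$) and verify each defining property in turn. Normality holds since any $t\in[\gamma(1),\beta(1)]$ satisfies $u(t)=1$ (non-emptiness coming from (iv)); fuzzy convexity is immediate because for each fixed $\alpha$ the set $\{t:\gamma(\alpha)\leq t\leq\beta(\alpha)\}$ is an interval; compactness of $\operatorname{supp}u$ follows from boundedness of $\gamma$ and $\beta$; and upper semi-continuity will be obtained by proving the sharper identity $[u]_\alpha=[\gamma(\alpha),\beta(\alpha)]$, which simultaneously yields the correct level-set representation. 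Uniqueness is then automatic, since any $v\in\mathbb{R}_{\mathcal{F}}$ with the same level sets satisfies $v(t)=\sup\{\alpha:t\in[v]_\alpha\}=u(t)$.

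The main obstacle will be the step showing $[u]_\alpha=[\gamma(\alpha),\beta(\alpha)]$ for $\alpha\in(0,1]$. The inclusion $\supseteq$ is direct from the definition of $u$, but the reverse inclusion requires converting a supremum over $\alpha'$ into an attained maximum at $\alpha'=\alpha$; this is exactly where left continuity of $\gamma$ and $\beta$ in (i)--(ii) is indispensable. Concretely, if $u(t)\geq\alpha$, I would select a sequence $\alpha_n\uparrow\alpha$ with $\gamma(\alpha_n)\leq t\leq\beta(\alpha_n)$ and then pass to the limit using left continuity to conclude $\gamma(\alpha)\leq t\leq\beta(\alpha)$. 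A parallel argument using right continuity at $0$ handles the case $\alpha=0$. Once this identity is in place, every remaining defining property of a fuzzy number falls out cleanly.
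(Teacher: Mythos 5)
This theorem is quoted in the paper from Goetschel and Voxman \cite{gv} as a background result; the paper itself gives no proof, so there is nothing internal to compare your argument against. Judged on its own, your proof is correct and follows the standard route for this representation theorem. The forward direction is sound: the identity $[u]_\alpha=\bigcap_n[u]_{\alpha_n}$ for $\alpha_n\uparrow\alpha$ is the right key step, and the passage from nested compact intervals to convergence of endpoints is valid because the intersection $[u]_\alpha$ is nonempty (by normality, since $\alpha\le 1$) --- worth saying explicitly, as Hausdorff convergence of nested intervals to their intersection fails for empty intersections. Your treatment of right continuity at $0$ via $\bigcup_{\alpha>0}[u]_\alpha=\{t:u(t)>0\}$ and closure is also correct. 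In the converse direction, the one place where your write-up is looser than it should be is fuzzy convexity: the property concerns the level sets of $u$, not the prescribed intervals $[\gamma(\alpha),\beta(\alpha)]$ themselves, so it is not ``immediate'' from the latter being intervals; but since you subsequently prove $[u]_\alpha=[\gamma(\alpha),\beta(\alpha)]$, convexity (like upper semicontinuity) does fall out of that identity, and your argument for the identity --- using that the set $\{\alpha':\gamma(\alpha')\le t\le\beta(\alpha')\}$ is downward closed by monotonicity and then invoking left continuity at $\alpha$ --- is exactly right. With those two clarifications the proof is complete.
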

Let $u,v\in \mathbb{R}_{\mathcal{F}}$ and $k\in\mathbb{R}$. The addition and scalar multiplication are defined by
\begin{eqnarray}\label{operations}
(i) \ [u+v]_\alpha=[u^-_{\alpha}+v^-_{\alpha}, u^+_{\alpha}+v^+_{\alpha}] \qquad(ii)\  [k u]_\alpha=\begin{cases}[ku^-_{\alpha}, ku^+_{\alpha}] \ \  \textrm{if}\ \  k\geq 0\\ [ku^+_{\alpha},ku^-_{\alpha}] \ \ \textrm{if}\ \ k< 0\end{cases}
\end{eqnarray}
where $[u]_\alpha=[u^-_{\alpha}, u^+_{\alpha}]$, for all $\alpha\in[0,1]$.

Fuzzy number $\overline{0}$ is identity element in $(\mathbb{R}_{\mathcal{F}}, +)$ and none of $u \neq \overline{r}$ has inverse in $(\mathbb{R}_{\mathcal{F}}, +)$. For any $k_1,k_2 \in\mathbb{R}$ with $k_1k_2\geq 0$, distribution property $(k_1 + k_2) u = k_1u + k_2u$ holds but for general $k_1,k_2 \in\mathbb{R}$ it fails to hold. On the other hand properties $k (u+ v) = ku + kv$ and $k_1 (k_2 u) = (k_1 k_2) u$ holds for any $k, k_1,k_2 \in\mathbb{R}$. It should be noted that $\mathbb{R}_{\mathcal{F}}$ with addition and scalar multiplication defined above is not a linear space over $\mathbb{R}$.

Partial ordering relation on $\mathbb{R}_{\mathcal{F}}$ is defined as follows:
\begin{eqnarray*}
u\preceq v \Longleftrightarrow
[u]_{\alpha}\preceq[v]_{\alpha}\Longleftrightarrow u^-_{\alpha}\leq
v^-_{\alpha}~\text{ and }~u^+_{\alpha}\leq v^+_{\alpha}~\text{ for
all }~\alpha\in[0,1].
\end{eqnarray*}

The metric $D$ on $\mathbb{R}_{\mathcal{F}}$ is defined as
\begin{eqnarray*}
 D(u,v):=
\sup_{\alpha\in[0,1]}\max\{|u^-_{\alpha}-v^-_{\alpha}|,|u^+_{\alpha}-
v^+_{\alpha}|\},
\end{eqnarray*}
and it has the following properties
\begin{eqnarray*}
D(ku,kv)=|k|D(u,v), \quad\quad D(u+v,w+z)\leq D(u,w)+D(v,z)
\end{eqnarray*}
where $u,v,w,z\in \mathbb{R}_{\mathcal{F}}$ and $k\in\mathbb{R}$.
\begin{definition}\label{series}\cite{kim}
Let $(u_k)$ be a sequence of fuzzy numbers. Denote $s_n=\sum_{k=0}^nu_k$ for
all $n\in\mathbb{N}$, if the sequence $(s_n)$ converges to a fuzzy
number $u$ then we say that the series $\sum u_k$ of fuzzy numbers
converges to $u$ and write $\sum u_k=u$ which implies that
\begin{eqnarray*}
\sum_{k=0}^{n}u^{-}_{k}(\alpha)\rightarrow u^{-}(\alpha) ~\text{
and }~ \sum_{k=0}^{n}u^{+}_{k}(\alpha)\rightarrow u^{+}(\alpha)\qquad\qquad (n\to\infty)
\end{eqnarray*}
uniformly in $\alpha\in [0,1]$. Conversely, for the sequence
$(u_{k})$ of fuzzy numbers if  \ $\sum_{k}u^{-}_{k}(\alpha)=\gamma(\alpha)$ and
$\sum_{k}u^{+}_{k}(\alpha)=\beta(\alpha)$ converge uniformly in
$\alpha$, then $\{(\gamma(\alpha),\beta(\alpha)): \alpha\in
[0,1]\}$ defines a fuzzy number $u$ represented by $[u]_{\alpha}=[\gamma(\alpha),\beta(\alpha)]$ and $\sum u_{k}=u$.
\end{definition}
\noindent
Besides, we say that series $\sum u_k$ is bounded if the sequence $(s_n)$ is bounded. The set of bounded series of fuzzy numbers is denoted by  $bs(F)$.
\begin{theorem}\label{triangle}\cite{tb}
If $\sum u_k$ and $\sum v_k$ converge, then $D\left(\sum u_k, \sum v_k\right)\leq \sum D(u_k, v_k).$
\end{theorem}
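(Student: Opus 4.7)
The plan is to reduce the statement about infinite series to the finite analogue by induction, then pass to the limit using continuity of the metric $D$. Let $s_n=\sum_{k=0}^n u_k$ and $t_n=\sum_{k=0}^n v_k$ denote the partial sums, and let $u=\sum u_k$, $v=\sum v_k$ be the limits guaranteed by hypothesis. First I will establish the finite inequality
\[
D(s_n,t_n)\leq \sum_{k=0}^{n}D(u_k,v_k)
\]
by induction on $n$. The base case $n=0$ is trivial, and the inductive step is an immediate application of the subadditivity property $D(u+v,w+z)\leq D(u,w)+D(v,z)$ recorded just before Definition \ref{series}, applied to $s_n=s_{n-1}+u_n$ and $t_n=t_{n-1}+v_n$.

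Next I will pass to the limit $n\to\infty$. The left-hand side $D(s_n,t_n)$ converges to $D(u,v)$: indeed, applying the same subadditivity with the decompositions $s_n = s_n + \overline{0}$ and $u = u+\overline{0}$ yields
\[
|D(s_n,t_n)-D(u,v)|\leq D(s_n,u)+D(t_n,v),
\]
which tends to $0$ since $s_n\to u$ and $t_n\to v$ in the metric $D$. For the right-hand side, the sequence $\sum_{k=0}^{n}D(u_k,v_k)$ is non-decreasing in $n$, hence converges in $[0,\infty]$; if the limit is $+\infty$ the asserted inequality is trivial, and otherwise the limit equals $\sum D(u_k,v_k)$, yielding the desired bound.

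I do not expect any real obstacle here, since both ingredients—the subadditivity of $D$ under fuzzy addition and the convergence of the partial sums to the series sum—are already available in the preliminaries. The only mildly delicate point is justifying the continuity of $D$ in its two arguments, but this follows directly from subadditivity applied to differences, as indicated above. Once this is in hand, combining the finite inequality with the two limits completes the proof.
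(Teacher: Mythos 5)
Your argument is correct. Note that the paper offers no proof of this statement—it is imported from \cite{tb}—so there is no internal proof to compare against; your route (finite inequality $D(s_n,t_n)\leq\sum_{k=0}^n D(u_k,v_k)$ by induction on the subadditivity property, then passage to the limit) is the standard and expected one, and it also correctly covers the case where the right-hand side diverges. The one imprecise point is your justification of $|D(s_n,t_n)-D(u,v)|\leq D(s_n,u)+D(t_n,v)$: this does not come from the subadditivity $D(u+v,w+z)\leq D(u,w)+D(v,z)$ via decompositions involving $\overline{0}$, since fuzzy numbers have no additive inverses and hence no ``differences'' to which one could apply it. It follows instead from two applications of the ordinary triangle inequality for $D$ (e.g.\ $D(s_n,t_n)\leq D(s_n,u)+D(u,v)+D(v,t_n)$ and symmetrically), which is available because $D$ is a metric on $\mathbb{R}_{\mathcal{F}}$, or directly from its definition as a supremum of maxima of absolute differences. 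With that substitution the proof is complete.
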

\noindent A fuzzy valued function $f:I \subset\mathbb{R}\to \mathbb{R}_{\mathcal{F}}$ has the parametric representation
\begin{eqnarray*}
\left[f(x)\right]_{\alpha}=\left[f^-_{\alpha}(x),f^+_{\alpha}(x)\right],
\end{eqnarray*}
for each $x\in I$ and $\alpha\in[0,1]$.

A function $f:\mathbb{R}\to \mathbb{R}_{\mathcal{F}}$ is $2\pi$-periodic if $f(x)=f(x+2\pi)$ for all $x\in\mathbb{R}$. The space of all $2\pi$-periodic and continuous fuzzy valued functions on $\mathbb{R}$ is denoted by $C_{2\pi}^{(\mathcal{F})}(\mathbb{R})$ where the continuity is meant with respect to metric $D$. Besides the space of all $2\pi$-periodic and real valued continuous functions on $\mathbb{R}$ is denoted by $C_{2\pi}(\mathbb{R})$ and equipped with the supremum norm $\Vert\cdot\Vert$. Here we note that if $f\in C_{2\pi}^{(\mathcal{F})}(\mathbb{R})$ then $f^{\mp}_{\alpha}\in C_{2\pi}(\mathbb{R})$ in view of metric $D$.

Let $f,g: I \to \mathbb{R}_{\mathcal{F}}$ be fuzzy valued functions. The distance between $f$ and $g$ is defined by
\begin{eqnarray*}
 D^*(f,g)=\sup_{x\in I} D(f(x),g(x))=\sup_{x\in I}\sup_{\alpha\in[0,1]}\max\{|f^-_{\alpha}(x)-g^-_{\alpha}(x)|,|f^+_{\alpha}(x)-
g^+_{\alpha}(x)|\}.
\end{eqnarray*}
Let $L:C_{\mathcal{F}}(\mathbb{R})\to C_{\mathcal{F}}(\mathbb{R})$ be an operator where $C_{\mathcal{F}}(\mathbb{R})$ denotes the space of all continuous fuzzy valued functions on $\mathbb{R}$. Then, we call $L$ a {\it fuzzy linear operator} iff
\begin{eqnarray*}
L(c_1 f_1+ c_2 f_2)=c_1 L(f_1)+ c_2 L(f_2),
\end{eqnarray*}
for any $c_1,c_2\in \mathbb{R}, f_1,f_2\in C_{\mathcal{F}}(\mathbb{R})$. Also, operator $L$ is called {\it fuzzy positive linear operator} if it is fuzzy linear and the condition $L(f;x)\preceq L(g;x)$ is satisfied for any $f,g\in C_{\mathcal{F}}(\mathbb{R})$ with $f(x)\preceq g(x)$ and for all $x\in\mathbb{R}$ .
\begin{theorem}\cite{anastassiou}\label{theorem1}
Let $\left\{L_n\right\}_{n\in\mathbb{N}}$ be a sequence of fuzzy positive linear operators from $C_{2\pi}^{(\mathcal{F})}(\mathbb{R})$ into  itself. Assume that there exists a corresponding sequence $\left\{\widetilde{L}_n\right\}_{n\in\mathbb{N}}$ of positive linear operators defined on $C_{2\pi}(\mathbb{R})$ with the property
\begin{eqnarray}\label{property1}
\left\{L_n(f;x)\right\}_{\alpha}^{\mp}=\widetilde{L}_n\left(f_{\alpha}^{\mp};x\right),
\end{eqnarray}
for all $x\in\mathbb{R},\alpha\in[0,1],n\in\mathbb{N}$ and $f\in C_{2\pi}^{(\mathcal{F})}(\mathbb{R})$. Assume further that
\begin{eqnarray}\label{lazým}
\lim_{n\to\infty}\Vert\widetilde{L}_n(f_i)-f_i\Vert=0,
\end{eqnarray}
for $i=0,1,2$ with $f_0(x)=1, f_1(x)=\cos x, f_2(x)=\sin x$. Then, for all $f\in C_{2\pi}^{(\mathcal{F})}(\mathbb{R})$ we have
\begin{eqnarray*}
\lim_{n\to\infty}D^{*}(L_n(f),f)=0.
\end{eqnarray*}
\end{theorem}

\section{Main results}
\noindent We now introduce semicontinuous summation methods of series of fuzzy numbers, defined by means of nonnegative and uniformly bounded sequences of continuous real valued functions on $[0,\infty)$. Introduced method acts on the terms of the series of fuzzy numbers directly and does not require computation of the partial sums which may be challenging even in the case of series of real numbers. The method also includes many known summation methods.
\begin{definition}
Let $\sum u_n$ be a series of fuzzy numbers and $\left\{\phi_n(s)\right\}$ be a nonnegative uniformly bounded sequence of continuous real valued functions defined on $[0,\infty)$ such that $\phi_{n+1}(s)\leq \phi_n(s)$ for all $s\in[0,\infty)$ and $\phi_n(0)=1$. Then, series $\sum u_n$ of fuzzy numbers is said to be $(\phi)$ summable to fuzzy number $\mu$ if $\sum u_n\phi_n(s)$ exists for $s>0$ and
\begin{eqnarray*}
\lim_{s\to 0^+}\sum_{n=0}^{\infty}u_n\phi_n(s)=\mu.
\end{eqnarray*}
\end{definition}
\begin{theorem}\label{regularity}
If series $\sum u_n$ of fuzzy numbers converges to fuzzy number $\mu$, then it is $(\phi)$ summable to $\mu$.
\end{theorem}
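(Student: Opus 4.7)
The plan is to reduce the fuzzy claim to a classical Abel-summation regularity argument applied to the level endpoints, exploiting the monotonicity and nonnegativity of $\phi_n$.

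First, I would observe that since $\phi_n(s)\geq 0$, the scalar multiplication rule gives $[u_n\phi_n(s)]_\alpha=[\phi_n(s)u_n^-(\alpha),\,\phi_n(s)u_n^+(\alpha)]$. By Definition~\ref{series}, showing that $\sum u_n\phi_n(s)$ exists as a fuzzy number for $s>0$ thus reduces to showing that the real series $\sum_n \phi_n(s)u_n^-(\alpha)$ and $\sum_n \phi_n(s)u_n^+(\alpha)$ converge uniformly in $\alpha\in[0,1]$; likewise, the convergence $\eta(s):=\sum u_n\phi_n(s)\to\mu$ in the metric $D$ as $s\to 0^+$ is equivalent to convergence of these endpoint sums to $\mu^-(\alpha)$ and $\mu^+(\alpha)$ uniformly in $\alpha$.

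Fixing $s>0$, set $s_n^{\mp}(\alpha)=\sum_{k=0}^n u_k^{\mp}(\alpha)$. By hypothesis $\varepsilon_n:=\sup_\alpha\max\{|s_n^-(\alpha)-\mu^-(\alpha)|,\,|s_n^+(\alpha)-\mu^+(\alpha)|\}\to 0$, and in particular the $s_n^{\mp}(\alpha)$ are uniformly bounded in $\alpha$ and $n$. Abel's summation by parts gives $\sum_{n=0}^{N}\phi_n(s)u_n^{\mp}(\alpha)=\phi_N(s)s_N^{\mp}(\alpha)+\sum_{n=0}^{N-1}(\phi_n(s)-\phi_{n+1}(s))s_n^{\mp}(\alpha)$. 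The monotonicity $\phi_{n+1}(s)\leq\phi_n(s)$ together with nonnegativity makes $\sum_n(\phi_n(s)-\phi_{n+1}(s))$ a nonnegative telescoping series summing to $\phi_0(s)-\phi_\infty(s)$, where $\phi_\infty(s):=\lim_n\phi_n(s)$ exists; combined with the uniform boundedness of $s_n^{\mp}(\alpha)$, the Weierstrass $M$-test shows that the right-hand sum converges absolutely and uniformly in $\alpha$. Letting $N\to\infty$ and splitting $s_n^{\mp}(\alpha)=\mu^{\mp}(\alpha)+(s_n^{\mp}(\alpha)-\mu^{\mp}(\alpha))$ in the telescoping part yields the representation $\sum_{n=0}^\infty\phi_n(s)u_n^{\mp}(\alpha)=\phi_0(s)\mu^{\mp}(\alpha)+\sum_{n=0}^\infty(\phi_n(s)-\phi_{n+1}(s))(s_n^{\mp}(\alpha)-\mu^{\mp}(\alpha))$, establishing that $\eta(s)$ exists as a fuzzy number whose level endpoints are the left-hand sides.

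To pass to $s\to 0^+$, I would note that continuity of $\phi_0$ at $0$ with $\phi_0(0)=1$, together with boundedness of $\mu^{\mp}$ (inherited from compactness of $[\mu]_0$), gives $\phi_0(s)\mu^{\mp}(\alpha)\to\mu^{\mp}(\alpha)$ uniformly in $\alpha$. For the remainder sum, given $\varepsilon>0$ I would choose $N$ with $\varepsilon_n<\varepsilon$ for $n\geq N$; the tail $n\geq N$ is then bounded by $\varepsilon(\phi_N(s)-\phi_\infty(s))\leq\varepsilon M$, where $M$ is a uniform bound on $\{\phi_n\}$, while the finite head $n<N$ tends to $0$ as $s\to 0^+$ because each $\phi_n(s)-\phi_{n+1}(s)\to 1-1=0$ by continuity and the normalisation $\phi_n(0)=1$. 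Hence both endpoint sums converge to $\mu^{\mp}(\alpha)$ uniformly in $\alpha$, so $D(\eta(s),\mu)\to 0$. The main obstacle I anticipate is tracking uniformity in $\alpha$ through every step; the key is that the uniform convergence of the fuzzy partial sums built into Definition~\ref{series} feeds the $M$-test bound above, so all intermediate series inherit uniform convergence without needing any further hypothesis on the $\phi_n$.
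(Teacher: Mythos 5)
Your proof is correct, and while the first half (existence of $\sum u_n\phi_n(s)$ for fixed $s>0$) is just the paper's appeal to Abel's uniform convergence test carried out by hand via summation by parts, the second half takes a genuinely different route. The paper handles the limit $s\to 0^+$ by viewing $f_n(s)=\sum_{k=0}^n u_k\phi_k(s)$ as continuous fuzzy-valued functions of $s$ on $[0,1]$, applying Abel's test to get uniform convergence on $[0,1]\times[0,1]$ in $(\alpha,s)$, and then invoking results of Fang--Xue to conclude that the limit function is continuous at $s=0$, where it equals $\mu$. You instead extract the explicit representation $\sum_{n}\phi_n(s)u_n^{\mp}(\alpha)=\phi_0(s)\mu^{\mp}(\alpha)+\sum_{n}\bigl(\phi_n(s)-\phi_{n+1}(s)\bigr)\bigl(s_n^{\mp}(\alpha)-\mu^{\mp}(\alpha)\bigr)$ and run a head/tail estimate: the tail is controlled uniformly in $s$ by the telescoping bound $\phi_N(s)-\phi_\infty(s)\leq M$ together with $\varepsilon_n\to 0$, and the finite head vanishes as $s\to 0^+$ by continuity and $\phi_n(0)=1$. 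Your argument is more elementary and self-contained (it needs no external continuity theorems for uniform limits of fuzzy-valued functions and in fact only uses continuity of each $\phi_n$ at $s=0$ rather than on all of $[0,1]$), at the cost of a longer computation; the paper's version is shorter once the two cited black boxes are accepted. All the uniformity-in-$\alpha$ bookkeeping you flag as the main risk is handled correctly: the $M$-test bound and the $\varepsilon_n$ estimate are both uniform in $\alpha$, and convergence in $D$ is precisely uniform convergence of the level endpoints.
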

\begin{proof}
Let $\sum u_n$ be a series of fuzzy numbers and $\sum u_n=\mu$. It is sufficient to show that series $\sum u_n\phi_n(s)$ exists for $s>0$ and
\begin{eqnarray*}
\lim_{s\to 0^+}\sum_{n=0}^{\infty}u_n\phi_n(s)=\mu.
\end{eqnarray*}
From Definition \ref{series}, series $\sum u_n\phi_n(s)$ exists for $s>0$ if series $\sum u_n^{\mp}(\alpha)\phi_n(s)$ converge uniformly in $\alpha\in[0,1]$. By convergence of the series $\sum u_n$ we know that $\sum u_n^{\mp}(\alpha)$ converge uniformly in $\alpha\in[0,1]$. Besides, $\left\{\phi_n(s)\right\}$ is uniformly bounded and $\phi_{n+1}(s)\leq \phi_n(s)$. So by Abel's uniform convergence test, series $\sum u_n^{\mp}(\alpha)\phi_n(s)$ converge uniformly in $\alpha$. Then, series $\sum u_n\phi_n(s)$ converges in fuzzy number space for $s>0$. Now we aim to show that
\begin{eqnarray*}
\lim_{s\to 0^+}\sum_{n=0}^{\infty}u_n\phi_n(s)=\mu.
\end{eqnarray*}
Define the sequence of continuous fuzzy valued functions $\left\{f_n(s)\right\}$ on $[0,1]$ such that $f_n(s)=\sum_{k=0}^nu_k\phi_k(s)$. Then, $\lim_{n\to\infty}f_n(s)=\sum_{k=0}^{\infty} u_k\phi_k(s)=f(s)$. From Theorem 3.3 and Theorem 3.5 in \cite{duzgunluk}, $f$ is continuous on $[0,1]$ if and only if sequence $\left(\left[f_n(s)\right]^{\mp}(\alpha)\right)$ uniformly converge to $\left(\left[f(s)\right]^{\mp}(\alpha)\right)$ on $[0,1]\times[0,1]$. Series $\sum u_k^{\mp}(\alpha)\phi_k(s)$ converge uniformly on $[0,1]\times[0,1]$ by Abel's uniform convergence test in view of the facts that
\begin{itemize}
\item $\sum u_k^{\mp}(\alpha)$ converge uniformly on $[0,1]\times[0,1]$,
\item $\left\{\phi_k(s)\right\}$ is uniformly bounded  on $[0,1]\times[0,1]$ and $\phi_{k+1}(s)\leq \phi_k(s)$ for $(\alpha,s)\in [0,1]\times[0,1]$.
\end{itemize}
So, $f(s)=\sum u_k\phi_k(s)$ is continuous on $[0,1]$ which implies
\begin{eqnarray*}
\lim_{s\to 0^+}\sum_{k=0}^{\infty}u_k\phi_k(s)=\sum_{k=0}^{\infty}u_k=\mu.
\end{eqnarray*}
This completes the proof.
\end{proof}
In view of the result above we see that convergence of a series of fuzzy numbers implies its $(\phi)$ summability in fuzzy number space. However $(\phi)$ summability of a series of fuzzy numbers may not imply its convergence. Motivated by this fact now we aim to investigate conditions under which $(\phi)$ summability of a series of fuzzy numbers imply convergence in fuzzy number space in particular cases $\phi_n(s)=e^{-\lambda_ns}$ and $\phi_n(s)=\frac{\lambda_0\lambda_1\ldots\lambda_n}{(s+\lambda_0)(s+\lambda_1)\ldots(s+\lambda_n)}\cdot$
\subsection{\bf Resummation of series of fuzzy numbers via generalized Dirichlet series}
\begin{definition}
Given a sequence $0\leq\lambda_0<\lambda_1<\cdots\to\infty$, a series $\sum u_n$ of fuzzy numbers is said to be $(A,\lambda)$ summable to a fuzzy number $\mu$ if generalized Dirichlet series $\sum u_n e^{-\lambda_ns}$ converges for $s>0$ and
\begin{eqnarray*}
\lim_{s\to 0^+}\sum_{n=0}^{\infty}u_n e^{-\lambda_ns}=\mu.
\end{eqnarray*}
\end{definition}
We note that a convergent series of fuzzy numbers is $(A,\lambda)$ summable by Theorem \ref{regularity} but converse statement is not necessarily to hold which can be seen by series $\sum u_n$ of fuzzy numbers whose general term is defined by
$$u_n(t)=
\begin{cases}
(n+1)^2(t+(-1)^{n+1}), \quad &(-1)^{n}\leq t\leq (-1)^{n}+ (n+1)^{-2}\\
2-(n+1)^2\left(t+(-1)^{n+1}\right),\quad &(-1)^{n}+ (n+1)^{-2}\leq t\leq(-1)^{n}+ 2(n+1)^{-2}\\
0,& (otherwise).
\end{cases}$$
Series $\sum_{n=0}^{\infty}u_n$ of fuzzy numbers is $(A,\ln (n+1))$ summable to fuzzy number
$$\mu(t)=
\begin{cases}
\frac{6\left(t-1/2\right)}{\pi^2}, \quad &1/2\leq t\leq 1/2+\frac{\pi^2}{6}\\
2-\frac{6\left(t-(1/2)\right)}{\pi^2}, \quad &1/2+\frac{\pi^2}{6}\leq t\leq 1/2+\frac{\pi^2}{3}\\
0,& (otherwise),
\end{cases}$$
but it is not convergent.
\begin{theorem}\label{tauber1}
If series $\sum u_n$ of fuzzy numbers is $(A,\lambda)$ summable to fuzzy number $\mu$ and $\frac{\lambda_n}{\lambda_n-\lambda_{n-1}}D(u_n,\bar{0})=o(1)$, then $\sum u_n=\mu$.
\end{theorem}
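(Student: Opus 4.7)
The plan is to reduce the fuzzy statement to the classical Landau--Schmidt Tauberian theorem for real generalized Dirichlet series, applied endpoint-by-endpoint on $\alpha$-level sets, while tracking uniformity in $\alpha\in[0,1]$ so that Definition~\ref{series} delivers the fuzzy conclusion. From the $(A,\lambda)$ summability hypothesis together with Definition~\ref{series}, the real series $\sum_n u_n^{\mp}(\alpha)\,e^{-\lambda_n s}$ converge uniformly in $\alpha\in[0,1]$ for every $s>0$, and the metric $D$ forces $\sum_n u_n^{\mp}(\alpha)\,e^{-\lambda_n s}\to\mu^{\mp}(\alpha)$ uniformly in $\alpha$ as $s\to 0^+$. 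Combining $|u_n^{\mp}(\alpha)|\le D(u_n,\bar{0})$ with the Tauberian hypothesis produces a single scalar sequence $\varepsilon_n\to 0$ with
\[
\frac{\lambda_n}{\lambda_n-\lambda_{n-1}}\,|u_n^{\mp}(\alpha)|\le\varepsilon_n\qquad(\alpha\in[0,1],\ n\in\mathbb{N}).
\]

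Fixing one endpoint (the other is identical), set $s_N^-(\alpha)=\sum_{n=0}^{N}u_n^-(\alpha)$ and $A_s^-(\alpha)=\sum_{n=0}^{\infty}u_n^-(\alpha)\,e^{-\lambda_n s}$. With the scaling $s=s_N:=1/\lambda_N$, I decompose
\[
s_N^-(\alpha)-\mu^-(\alpha)=\bigl(A_{s_N}^-(\alpha)-\mu^-(\alpha)\bigr)+\sum_{n=0}^{N}u_n^-(\alpha)\bigl(1-e^{-\lambda_n s_N}\bigr)-\sum_{n=N+1}^{\infty}u_n^-(\alpha)\,e^{-\lambda_n s_N}.
\]
The first bracket tends to zero uniformly in $\alpha$ by the uniform $(A,\lambda)$ summability just extracted. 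The middle finite sum is controlled via $1-e^{-x}\le x$:
\[
\Bigl|\sum_{n=0}^{N}u_n^-(\alpha)\bigl(1-e^{-\lambda_n s_N}\bigr)\Bigr|\le s_N\sum_{n=0}^{N}\lambda_n|u_n^-(\alpha)|\le s_N\sum_{n=0}^{N}\varepsilon_n(\lambda_n-\lambda_{n-1})=o(s_N\lambda_N)=o(1),
\]
where the last step is the standard Ces\`aro-type observation that $\varepsilon_n\to 0$ implies $\sum_{n\le N}\varepsilon_n(\lambda_n-\lambda_{n-1})=o(\lambda_N)$; since $\varepsilon_n$ is independent of $\alpha$, so is the bound.

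The genuine obstacle is the tail $T_N(\alpha):=\sum_{n>N}u_n^-(\alpha)\,e^{-\lambda_n s_N}$. Using $|u_n^-(\alpha)|\le\varepsilon_n(\lambda_n-\lambda_{n-1})/\lambda_n$, I bound
\[
|T_N(\alpha)|\le\Bigl(\sup_{n>N}\varepsilon_n\Bigr)\sum_{n>N}\frac{\lambda_n-\lambda_{n-1}}{\lambda_n}\,e^{-\lambda_n/\lambda_N},
\]
and control the remaining series by integral comparison with the decreasing function $t\mapsto t^{-1}e^{-t/\lambda_N}$, which after the substitution $t=\lambda_N u$ collapses to $\int_{1}^{\infty}u^{-1}e^{-u}\,du$, a finite constant independent of $N$ and $\alpha$. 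Hence $|T_N(\alpha)|=o(1)$ uniformly in $\alpha$. Combining the three estimates gives $\sup_{\alpha\in[0,1]}|s_N^-(\alpha)-\mu^-(\alpha)|\to 0$, and symmetrically for the upper endpoint. Uniform convergence of $\sum u_n^{\mp}(\alpha)$ in $\alpha$ together with Definition~\ref{series} then yields $\sum u_n=\mu$. The delicate point throughout is ensuring every estimate is uniform in $\alpha$; once that is arranged, the argument is a parameter-uniform version of Landau's classical proof.
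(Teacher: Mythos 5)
Your proof is correct and follows essentially the same route as the paper: the same splitting at $s=1/\lambda_N$ into the Abel-mean error, a finite sum controlled by $1-e^{-x}\le x$, and a tail controlled by integral comparison with a decreasing exponential, with the Tauberian hypothesis feeding in through $\lambda_n|u_n^{\mp}(\alpha)|\le\varepsilon_n(\lambda_n-\lambda_{n-1})$. The only cosmetic difference is that you run the estimates on the $\alpha$-level endpoints and track uniformity explicitly, whereas the paper works directly with the metric $D$ (via $D(\sum u_k,\sum v_k)\le\sum D(u_k,v_k)$), which packages the same computation.
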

\begin{proof}
Let series $\sum u_n$ of fuzzy numbers be $(A,\lambda)$ summable to $\mu$ and $\frac{\lambda_n}{\lambda_n-\lambda_{n-1}}D(u_n,\bar{0})=o(1)$ as $n\to\infty$. Then, since $\lim_{s\to 0^+}\sum_{k=0}^{\infty}u_k e^{-\lambda_ks}=\mu$ we have $\lim_{n\to \infty}\sum_{k=0}^{\infty}u_k e^{-\lambda_k/\lambda_n}=\mu$ in view of the fact that $1/\lambda_n\downarrow 0$ as $n\to\infty$. Hence, we get
\begin{eqnarray*}
D\left(\sum_{k=0}^n u_k, \mu\right) &\leq& D\left(\sum_{k=0}^n u_k, \sum_{k=0}^{\infty}u_ke^{-\lambda_k/\lambda_n}\right) +D\left(\sum_{k=0}^{\infty}u_ke^{-\lambda_k/\lambda_n},\mu\right)
\\&\leq&
\sum_{k=0}^n\left(1-e^{-\lambda_k/\lambda_n}\right)D(u_k,\bar{0})+\sum_{k=n+1}^{\infty}D(u_k,\bar{0})e^{-\lambda_k/\lambda_n}+D\left(\sum_{k=0}^{\infty}u_ke^{-\lambda_k/\lambda_n}, \mu\right)
\\&\leq&
\frac{1}{\lambda_n}\sum_{k=0}^n\lambda_k D(u_k,\bar{0}) +\sum_{k=n+1}^{\infty}\frac{\lambda_kD(u_k,\bar{0})}{\lambda_k-\lambda_{k-1}}\frac{\lambda_k-\lambda_{k-1}}{\lambda_k}e^{-\lambda_k/\lambda_n} +D\left(\sum_{k=0}^{\infty}u_ke^{-\lambda_k/\lambda_n}, \mu\right)
\\&=&
o(1)\left\{\frac{1}{\lambda_n}\sum_{k=0}^n(\lambda_k -\lambda_{k-1}) +\sum_{k=n+1}^{\infty}\frac{\lambda_k-\lambda_{k-1}}{\lambda_k}e^{-\lambda_k/\lambda_n}\right\}
\\&=&
o(1)\sum_{k=n+1}^{\infty}\frac{\lambda_k-\lambda_{k-1}}{\lambda_k}e^{-\lambda_k/\lambda_n}=\frac{o(1)}{\lambda_{n+1}}\sum_{k=n+1}^{\infty}e^{-\lambda_k/\lambda_n}\int_{\lambda_{k-1}}^{\lambda_k}dx
\\&=&
\frac{o(1)}{\lambda_{n+1}}\sum_{k=n+1}^{\infty}\int_{\lambda_{k-1}}^{\lambda_k}e^{-x/\lambda_n}dx=\frac{o(1)}{\lambda_{n+1}}\int_{\lambda_{n}}^{\infty}e^{-x/\lambda_n}dx=o(1)\left\{\frac{\lambda_{n}}{e\lambda_{n+1}}\right\}
\\&=&
o(1),
\end{eqnarray*}%
with agreements $n\geq1$ and $\lambda_{-1}=0$, which completes the proof.
\end{proof}
\noindent Replacing $o(1)$ with $O(1)$ in the proof of Theorem \ref{tauber1}, we can get following theorem.
\begin{theorem}
If series $\sum u_n$ of fuzzy numbers is $(A,\lambda)$ summable and $\frac{\lambda_n}{\lambda_n-\lambda_{n-1}}D(u_n,\bar{0})=O(1)$, then $(u_n)\in bs(F)$.
\end{theorem}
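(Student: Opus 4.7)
The plan is to mimic the proof of Theorem~\ref{tauber1} line by line, tracking constants rather than vanishing quantities, and conclude boundedness of the partial sums instead of convergence. Assume $\sum u_n$ is $(A,\lambda)$ summable to some fuzzy number $\mu$. Since $1/\lambda_n\downarrow 0^+$ as $n\to\infty$, the $(A,\lambda)$ summability delivers $D(\sum_{k=0}^{\infty}u_k e^{-\lambda_k/\lambda_n},\mu)=o(1)$. Starting from
\[
D(s_n,\mu)\leq D\!\left(\sum_{k=0}^{n}u_k,\;\sum_{k=0}^{\infty}u_k e^{-\lambda_k/\lambda_n}\right)+D\!\left(\sum_{k=0}^{\infty}u_k e^{-\lambda_k/\lambda_n},\mu\right)
\]
with $s_n=\sum_{k=0}^{n}u_k$, I would expand the first term via Theorem~\ref{triangle} into $\sum_{k=0}^{n}(1-e^{-\lambda_k/\lambda_n})D(u_k,\bar{0})+\sum_{k=n+1}^{\infty}e^{-\lambda_k/\lambda_n}D(u_k,\bar{0})$, and use the inequality $1-e^{-x}\leq x$ to majorise the head sum by $\frac{1}{\lambda_n}\sum_{k=0}^{n}\lambda_k D(u_k,\bar{0})$.

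The key step is to feed the Tauberian hypothesis $\frac{\lambda_k}{\lambda_k-\lambda_{k-1}}D(u_k,\bar{0})=O(1)$ into both remaining sums. For the head sum the hypothesis rewrites as $\lambda_k D(u_k,\bar{0})=O(1)(\lambda_k-\lambda_{k-1})$, so a Toeplitz-type average with weights summing to $\lambda_n/\lambda_n=1$ gives $\frac{1}{\lambda_n}\sum_{k=0}^{n}\lambda_k D(u_k,\bar{0})=O(1)$ (with the convention $\lambda_{-1}=0$). For the tail the same substitution reduces the estimate to $O(1)\sum_{k=n+1}^{\infty}\frac{\lambda_k-\lambda_{k-1}}{\lambda_k}e^{-\lambda_k/\lambda_n}$, and the integral comparison already worked out in Theorem~\ref{tauber1} bounds this sum by $\frac{\lambda_n}{e\lambda_{n+1}}\leq \frac{1}{e}$, hence it is $O(1)$.

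Combining the three contributions yields $D(s_n,\mu)=O(1)$, whence $D(s_n,\bar{0})\leq D(s_n,\mu)+D(\mu,\bar{0})=O(1)$ because $D(\mu,\bar{0})$ is a fixed finite quantity. This is exactly the assertion $(u_n)\in bs(F)$. I do not anticipate any genuine obstacle: the only point worth checking is that the Toeplitz averaging legitimately transports a pointwise $O(1)$ bound into an $O(1)$ weighted average, and this is immediate since the weights are nonnegative and sum to one. In short, the author's parenthetical remark---that one merely replaces $o(1)$ by $O(1)$ throughout the proof of Theorem~\ref{tauber1}---is literally correct, and no new ingredient is required beyond the observation that the resulting estimate still collapses to a bounded, rather than vanishing, quantity.
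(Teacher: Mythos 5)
Your proposal is correct and follows exactly the route the paper intends: the paper's entire proof of this theorem is the remark that one replaces $o(1)$ by $O(1)$ throughout the proof of Theorem~3.6, and your line-by-line tracking of the constants (head sum telescoping to $C$, tail sum bounded by $C/e$ via the same integral comparison, then $D(s_n,\bar{0})\leq D(s_n,\mu)+D(\mu,\bar{0})=O(1)$) is precisely that adaptation carried out in full.
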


\subsection{\bf Resummation of series of fuzzy numbers via generalized factorial series}
\begin{definition}
Given a sequence $0<\lambda_0<\lambda_1<\cdots\to\infty$ where $\sum \frac{1}{\lambda_n}$ diverges, a series $\sum u_n$ of fuzzy numbers is said to be summable by generalized factorial series to a fuzzy number $\mu$ if generalized factorial series $\sum \frac{u_n\lambda_0\ldots\lambda_n}{(s+\lambda_0)\ldots(s+\lambda_n)}$ converges for $s>0$ and
\begin{eqnarray*}
\lim_{s\to 0^+}\sum_{n=0}^{\infty} \frac{u_n\lambda_0\ldots\lambda_n}{(s+\lambda_0)\ldots(s+\lambda_n)}=\mu.
\end{eqnarray*}
\end{definition}
We note that a convergent series of fuzzy numbers is summable via generalized factorial series by Theorem \ref{regularity} but converse statement is not necessarily to hold which can be seen by series $\sum u_n$ of fuzzy numbers where
$$u_n(t)=
\begin{cases}
(n+1)^4(t+(-1)^{n+1}(n+1)), &(-1)^{n}(n+1)\leq t\leq (-1)^{n}(n+1)+ (n+1)^{-4}\\
2-(n+1)^4\left(t+(-1)^{n+1}(n+1)\right), &(-1)^{n}n+ (n+1)^{-4}\leq t\leq(-1)^{n}n+ 2(n+1)^{-4}\\
0,& (otherwise).
\end{cases}$$%
Series $\sum_{n=0}^{\infty}u_n$ of fuzzy numbers is summable via ordinary factorial series(with choice $\lambda_n=n+1$) to fuzzy number
$$\mu(t)=
\begin{cases}
\frac{90\left(t-1/4\right)}{\pi^4}, \quad &1/4\leq t\leq 1/4+\frac{\pi^4}{90}\\
2-\frac{90\left(t-(1/4)\right)}{\pi^4}, \quad &1/4+\frac{\pi^4}{90}\leq t\leq 1/4+\frac{\pi^4}{45}\\
0,& (otherwise),
\end{cases}$$
but it is not convergent.

\begin{theorem}\label{tauber2}
If series $\sum u_n$ of fuzzy numbers is summable to fuzzy number $\mu$ by factorial series  and
$\lambda_n\left(\sum_{r=0}^{n}\frac{1}{\lambda_r}\right)D(u_n,\bar{0})=o(1)$, then $\sum u_n=\mu$.
\end{theorem}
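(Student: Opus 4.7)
The plan is to mirror the structure of Theorem \ref{tauber1}'s proof, treating the factorial series as a disguised Dirichlet series. Write $\phi_k(s):=\prod_{j=0}^{k}\lambda_j/(s+\lambda_j)$ and $\Lambda_k:=\sum_{r=0}^{k}1/\lambda_r$. A logarithmic expansion shows $\phi_k(s)\approx e^{-s\Lambda_k}$, which suggests taking $s_n:=1/\Lambda_n$ as the natural analogue of the choice $s=1/\lambda_n$ used in Theorem \ref{tauber1}. With the convention $\Lambda_{-1}:=0$ the hypothesis rewrites as $\frac{\Lambda_n}{\Lambda_n-\Lambda_{n-1}}D(u_n,\bar 0)=o(1)$, which is exactly the Dirichlet-type Tauberian condition in the variable $\Lambda_k$.

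Since $\sum 1/\lambda_k$ diverges we have $\Lambda_n\to\infty$ and $s_n\downarrow 0$, so the summability hypothesis gives $\sum_{k=0}^{\infty}u_k\phi_k(s_n)\to\mu$. Applying the triangle inequality and Theorem \ref{triangle},
\begin{eqnarray*}
D\!\left(\sum_{k=0}^{n}u_k,\mu\right)\leq \sum_{k=0}^{n}\bigl(1-\phi_k(s_n)\bigr)D(u_k,\bar 0)+\sum_{k=n+1}^{\infty}\phi_k(s_n)D(u_k,\bar 0)+D\!\left(\sum_{k=0}^{\infty}u_k\phi_k(s_n),\mu\right),
\end{eqnarray*}
and it remains to show that the first two sums are $o(1)$.

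For the head sum I would use $\log(1+x)\leq x$ to get $1-\phi_k(s)\leq s\Lambda_k$; writing $D(u_k,\bar 0)=\epsilon_k/(\lambda_k\Lambda_k)$ with $\epsilon_k\to 0$, the head sum is dominated by $s_n\sum_{k=0}^{n}\epsilon_k/\lambda_k=\Lambda_n^{-1}\sum_{k=0}^{n}\epsilon_k(\Lambda_k-\Lambda_{k-1})$, which tends to $0$ by a standard Toeplitz/Cesaro argument. The tail sum rests on the telescoping identity
\begin{eqnarray*}
\phi_k(s)/\lambda_k=\bigl(\phi_{k-1}(s)-\phi_k(s)\bigr)/s\qquad(\phi_{-1}\equiv 1),
\end{eqnarray*}
which follows from $\phi_{k-1}(s)-\phi_k(s)=\phi_{k-1}(s)\cdot s/(s+\lambda_k)$; since $\phi_k(s)\to 0$ as $k\to\infty$ (because $\sum 1/\lambda_k$ diverges), telescoping from $k=n+1$ yields $\sum_{k=n+1}^{\infty}\phi_k(s)/\lambda_k=\phi_n(s)/s$. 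Combined with $1/\Lambda_k\leq 1/\Lambda_{n+1}$ for $k\geq n+1$, the tail sum is at most $\sup_{k\geq n+1}|\epsilon_k|\cdot\Lambda_n\phi_n(s_n)/\Lambda_{n+1}=o(1)$.

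The main obstacle is uncovering the telescoping identity above; it plays the role that $\int_{\lambda_n}^{\infty}e^{-x/\lambda_n}\,dx=\lambda_n/e$ plays in the proof of Theorem \ref{tauber1}. Without it one is forced to compare $\phi_k(s)$ with $e^{-s\Lambda_k}$ and integrate, which is awkward because the natural inequality $\phi_k(s)\geq e^{-s\Lambda_k}$ points the wrong way for controlling the tail.
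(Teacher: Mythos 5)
Your proposal is correct, and its skeleton (splitting $D(\sum_{k=0}^n u_k,\mu)$ into a head sum, a tail sum, and the summability error; choosing $s_n=1/\Lambda_n$; bounding the head via $1-\phi_k(s)\le s\Lambda_k$ and a Toeplitz argument) coincides with the paper's proof, which makes exactly the same choice $s=1/\gamma_n$ with $\gamma_k=\sum_{r=0}^k 1/\lambda_r$ and uses the same head estimate. Where you genuinely diverge is the tail. The paper invokes Landau's inequality $\phi_k(s)<e^{-\frac{s}{2}\Lambda_k}$ and then estimates $\sum_{k>n}\frac{\Lambda_k-\Lambda_{k-1}}{\Lambda_k}e^{-\Lambda_k/(2\Lambda_n)}$ by the integral comparison already used in Theorem \ref{tauber1}, obtaining the bound $2\Lambda_n/(\sqrt{e}\,\Lambda_{n+1})$. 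You instead observe the exact telescoping identity $\phi_k(s)/\lambda_k=(\phi_{k-1}(s)-\phi_k(s))/s$, which (together with $\phi_k(s)\to 0$, guaranteed by the divergence of $\sum 1/\lambda_k$) gives $\sum_{k>n}\phi_k(s)/\lambda_k=\phi_n(s)/s$ in closed form and hence the tail bound $\sup_{k>n}\epsilon_k\cdot\Lambda_n\phi_n(s_n)/\Lambda_{n+1}\le\sup_{k>n}\epsilon_k$. This is a real simplification: it is exact rather than an estimate, it removes the dependence on the nontrivial cited exponential bound for the product (your remark that the elementary inequality $\phi_k(s)\ge e^{-s\Lambda_k}$ points the wrong way is exactly why the paper has to cite Landau), and it avoids the integral comparison altogether. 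The paper's route, on the other hand, has the virtue of running the Dirichlet and factorial cases through literally the same final computation. Both arguments are sound.
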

\begin{proof}
Let series $\sum u_n$ of fuzzy numbers be summable by factorial series to $\mu$ and $\lambda_n\left(\sum_{r=0}^{n}\frac{1}{\lambda_r}\right)D(u_n,\bar{0})=o(1)$ as $n\to\infty$. Then, we get
\begin{eqnarray*}
D\left(\sum_{k=0}^n u_k, \mu\right)&\leq& D\left(\sum_{k=0}^n u_k, \sum_{k=0}^{\infty} \frac{u_k\lambda_0\ldots\lambda_k}{(s+\lambda_0)\ldots(s+\lambda_k)}\right) +D\left(\sum_{k=0}^{\infty} \frac{u_k\lambda_0\ldots\lambda_k}{(s+\lambda_0)\ldots(s+\lambda_k)},
\mu\right)
\\&\leq&
\sum_{k=0}^n\left(1-\frac{\lambda_0\ldots\lambda_k}{(s+\lambda_0)\ldots(s+\lambda_k)}\right)D(u_k,\bar{0})+\sum_{k=n+1}^{\infty}\frac{D(u_k,\bar{0})\lambda_0\ldots\lambda_k}{(s+\lambda_0)\ldots(s+\lambda_k)}
\\&\ \ &
+D\left(\sum_{k=0}^{\infty} \frac{u_k\lambda_0\ldots\lambda_k}{(s+\lambda_0)\ldots(s+\lambda_k)},\mu\right)
\\&\leq&
s\sum_{k=0}^nD(u_k,\bar{0})\sum_{r=0}^{k}\frac{1}{\lambda_r}+\sum_{k=n+1}^{\infty}D(u_k,\bar{0})e^{-\frac{s}{2}\sum_{r=0}^{k}\frac{1}{\lambda_r}}
+D\left(\sum_{k=0}^{\infty} \frac{u_k\lambda_0\ldots\lambda_k}{(s+\lambda_0)\ldots(s+\lambda_k)},\mu\right)
\end{eqnarray*}
in view of the inequalities (see \cite[p. 199]{fakultaten},\cite[p. 23]{main})
\begin{eqnarray*}
\left| \frac{\lambda_0\ldots\lambda_n}{(s+\lambda_0)\ldots(s+\lambda_n)}\right| < e^{-\frac{s}{2}\sum_{r=0}^{n}\frac{1}{\lambda_r}}, \qquad 1-\frac{\lambda_0\ldots\lambda_n}{(s+\lambda_0)\ldots(s+\lambda_n)}<|s|\sum_{r=0}^{n}\frac{1}{\lambda_r}\cdot
\end{eqnarray*}
Now letting $\gamma_k=\sum_{r=0}^{k}\frac{1}{\lambda_r}$, taking $s=\frac{1}{\gamma_n}$ and proceeding as in the proof of Theorem \ref{tauber1} we get
\begin{eqnarray*}
D\left(\sum_{k=0}^n u_k, \mu\right)&=&o(1)\left\{\frac{1}{\gamma_n}\sum_{k=0}^n(\gamma_k-\gamma_{k-1})+\sum_{k=n+1}^{\infty}\frac{\gamma_k-\gamma_{k-1}}{\gamma_k}e^{-\frac{\gamma_k}{2\gamma_n}}\right\}
=o(1)\left\{\frac{2\gamma_n}{\sqrt{e}\gamma_{n+1}}\right\}=o(1),
\end{eqnarray*}
which completes the proof.
\end{proof}
\noindent Replacing $o(1)$ with $O(1)$ in the proof of Theorem \ref{tauber2}, we may get following theorem.
\begin{theorem}
If series $\sum u_n$ of fuzzy numbers is summable by factorial series and $\lambda_n\left(\sum_{r=0}^{n}\frac{1}{\lambda_r}\right)D(u_n,\bar{0})=O(1)$, then $(u_n)\in bs(F)$.
\end{theorem}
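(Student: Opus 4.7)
The plan is to adapt the proof of Theorem \ref{tauber2} almost verbatim, simply tracking $O(1)$ quantities instead of $o(1)$ quantities throughout. Since the hypothesis asserts summability by factorial series, there is a fuzzy number $\mu$ to which $\sum u_n$ is summable in this sense; fix this $\mu$ and write $\gamma_k=\sum_{r=0}^{k}1/\lambda_r$, noting that $\gamma_n\to\infty$ because $\sum 1/\lambda_n$ diverges. The goal is to show that $D\!\left(\sum_{k=0}^{n}u_k,\mu\right)=O(1)$, which, since $\mu$ is a fixed fuzzy number, yields boundedness of the partial sums and hence $(u_n)\in bs(F)$.

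First, I would use exactly the triangle-inequality splitting from Theorem \ref{tauber2}, inserting $\sum_{k=0}^{\infty}u_k\lambda_0\cdots\lambda_k/((s+\lambda_0)\cdots(s+\lambda_k))$ between $\sum_{k=0}^{n}u_k$ and $\mu$, and then applying Theorem \ref{triangle}. This produces three contributions: the head sum over $k\leq n$ controlled by $s\gamma_k$, the tail sum over $k\geq n+1$ controlled by $e^{-s\gamma_k/2}$, and the deviation of the factorial transform from $\mu$. The cited inequalities from \cite[p.~199]{fakultaten} and \cite[p.~23]{main} apply unchanged.

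Next, set $s=1/\gamma_n$, which tends to $0^+$ as $n\to\infty$. The hypothesis $\lambda_n\gamma_n D(u_n,\bar 0)=O(1)$ gives $D(u_k,\bar 0)\leq C\cdot\frac{1}{\lambda_k\gamma_k}=C\cdot\frac{\gamma_k-\gamma_{k-1}}{\gamma_k}$ for some constant $C$. The head sum becomes
\begin{eqnarray*}
\frac{1}{\gamma_n}\sum_{k=0}^{n}\gamma_k D(u_k,\bar 0)\leq \frac{C}{\gamma_n}\sum_{k=0}^{n}(\gamma_k-\gamma_{k-1})=C,
\end{eqnarray*}
and the tail sum, using the same telescoping/integral comparison as in Theorem \ref{tauber2},
\begin{eqnarray*}
\sum_{k=n+1}^{\infty}D(u_k,\bar 0)e^{-\gamma_k/(2\gamma_n)}\leq \frac{C}{\gamma_{n+1}}\sum_{k=n+1}^{\infty}\int_{\gamma_{k-1}}^{\gamma_k}e^{-x/(2\gamma_n)}\,dx=\frac{C}{\gamma_{n+1}}\int_{\gamma_n}^{\infty}e^{-x/(2\gamma_n)}\,dx=\frac{2C\gamma_n}{\sqrt{e}\,\gamma_{n+1}}=O(1).
\end{eqnarray*}
Finally, the deviation term is $o(1)$ by summability of $\sum u_n$ via the factorial series, hence certainly $O(1)$. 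Adding the three estimates gives $D\!\left(\sum_{k=0}^{n}u_k,\mu\right)=O(1)$, completing the proof.

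I do not foresee a real obstacle: the analytic inequalities, telescoping, and integral comparison are identical to those already used in Theorem \ref{tauber2}. The only care needed is to verify that replacing $o(1)$ with $O(1)$ does not interact badly with the telescoping of the head term; because the head sum collapses to a constant times $\gamma_n/\gamma_n$, the $O(1)$ hypothesis suffices without any further control on $D(u_k,\bar 0)$ beyond what is assumed.
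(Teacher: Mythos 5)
Your proposal is correct and follows exactly the route the paper intends: its "proof" of this theorem is literally the instruction to rerun the argument of Theorem \ref{tauber2} with $o(1)$ replaced by $O(1)$, which is what you carry out (same splitting via the factorial transform, same Landau inequalities, same choice $s=1/\gamma_n$, same telescoping and integral comparison). Your added observations --- that the head term collapses to the constant $C$, the tail is bounded by $2C\gamma_n/(\sqrt{e}\,\gamma_{n+1})\leq 2C/\sqrt{e}$, the deviation term is $o(1)$ by summability, and $D(s_n,\bar{0})\leq D(s_n,\mu)+D(\mu,\bar{0})=O(1)$ gives $(u_n)\in bs(F)$ --- are exactly the details the paper leaves implicit.
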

\section{Resummation of level Fourier series of fuzzy valued functions}
\noindent Bede et al.\cite{fuzzyfourier} defined fuzzy Fourier sum of continuous fuzzy valued function $f:[-\pi,\pi]\to\mathbb{R}_{\mathcal{F}}$ by
\begin{eqnarray*}
\frac{a_0}{2}+\sum_{n=1}^{m}a_n\cos(nx)+b_n\sin(nx)
\end{eqnarray*}
where $a_0=\frac{1}{\pi}\int_{-\pi}^{\pi}f(x)dx$, $a_n=\frac{1}{\pi}\int_{-\pi}^{\pi}f(x)\cos(nx)dx$, $b_n=\frac{1}{\pi}\int_{-\pi}^{\pi}f(x)\sin(nx)dx$ and the integrals are in the fuzzy Riemann sense\cite{gv}. The $\alpha-$level set of a convergent fuzzy Fourier series of $f$ is of the form
\begin{eqnarray*}
\left[\frac{\{a_0\}^-_{\alpha}}{2}+\sum_{n=1}^{\infty}\left\{a_n\cos(nx)\right\}^-_{\alpha}+\left\{b_n\sin(nx)\right\}^-_{\alpha},
\frac{\{a_0\}^{+}_{\alpha}}{2}+\sum_{n=1}^{\infty}\left\{a_n\cos(nx)\right\}^{+}_{\alpha}+\left\{b_n\sin(nx)\right\}^{+}_{\alpha}\right].
\end{eqnarray*}%
By Theorem \ref{rep} and Definition \ref{series} , $\alpha-$level set of a convergent fuzzy Fourier series represents a fuzzy number. However, it is a bit challenging to express $\{a_n\cos(nx)\}^{\mp}_{\alpha}$ and $\{b_n\sin(nx)\}^{\mp}_{\alpha}$ in terms of $f^{-}_{\alpha}(x)$ and $f^{+}_{\alpha}(x)$ explicitly due to the effect of changing signs of $\sin(nx),\cos(nx)$ on interval $[-\pi,\pi]$(see $(ii)$ of \eqref{operations}). To tackle this problem, we now define a new type Fourier series for fuzzy valued functions which may be useful in calculations via $f^{-}_{\alpha}(x)$ and $f^{+}_{\alpha}(x)$.
\begin{definition}
Let $f$ be a $2\pi$-periodic continuous fuzzy valued function. Then, level Fourier series of $f$ is defined by the pair of series
\begin{eqnarray}\label{levelfourier}
\frac{a^{\mp}_0(\alpha)}{2}+\sum_{n=1}^{\infty}a_n^{\mp}(\alpha)\cos(nx)+b^{\mp}_n(\alpha)\sin(nx)
\end{eqnarray}%
where $a^{\mp}_0(\alpha)=\frac{1}{\pi}\int_{-\pi}^{\pi}f^{\mp}_{\alpha}(x)dx$, $a^{\mp}_n(\alpha)=\frac{1}{\pi}\int_{-\pi}^{\pi}f^{\mp}_{\alpha}(x)\cos(nx)dx$ and $b^{\mp}_n(\alpha)=\frac{1}{\pi}\int_{-\pi}^{\pi}f^{\mp}_{\alpha}(x)\sin(nx)dx$. In \eqref{levelfourier}, "-" and "+" signed series are called left and right level Fourier series of $f$, respectively.
\end{definition}
\noindent Left and right level Fourier series of a fuzzy valued function $f$ are expressed in terms of  $f^{-}_{\alpha}(x)$ and $f^{+}_{\alpha}(x)$, respectively. Hence, using level Fourier series is advantageous in calculations through the endpoints of $\alpha-$level set of $f$. However, it is disadvantageous in that the pair in \eqref{levelfourier} may not represent a fuzzy valued function(see Theorem \ref{rep}). Besides, as the common problem of convergence of Fourier series, level Fourier series may not converge to $f^{\mp}_{\alpha}(x)$ even in the pointwise sense. At this point we may use summation methods to achieve both fuzzification and convergence of level Fourier series. Let consider the summation method $(A,n)$. Since \eqref{levelfourier} is of the form
\begin{eqnarray*}
f^{\mp}_{\alpha}(x)&\sim&\frac{1}{2\pi}\int_{-\pi}^{\pi}f^{\mp}_{\alpha}(t)dt+\frac{1}{2\pi}\sum_{n=1}^{\infty}\int_{-\pi}^{\pi}f^{\mp}_{\alpha}(t)2\cos(n(x-t))dt
\\&=&
\frac{1}{2\pi}\int_{-\pi}^{\pi}f^{\mp}_{\alpha}(x-t)dt+\frac{1}{2\pi}\sum_{n=1}^{\infty}\int_{-\pi}^{\pi}f^{\mp}_{\alpha}(x-t)2\cos(nt)dt
\\&=&
\sum_{n=0}^{\infty}\dot{f}_n^{\mp}(\alpha,x)
\end{eqnarray*}
where
\begin{eqnarray*}
\dot{f}_n^{\mp}(\alpha,x)=
\begin{cases}
\frac{1}{2\pi}\int_{-\pi}^{\pi}f^{\mp}_{\alpha}(x-t)dt, \quad &n=0\\[3mm]
\frac{1}{\pi}\int_{-\pi}^{\pi}f^{\mp}_{\alpha}(x-t)\cos(nt)dt,\quad &n\neq0,
\end{cases}
\end{eqnarray*}
$(A,n)$ mean of level Fourier series of $f$ is
\begin{eqnarray*}
\sum \dot{f}^{\mp}_ne^{-ns}&=&\frac{1}{2\pi}\int_{-\pi}^{\pi}f^{^\mp}_{\alpha}(x-t)dt+\frac{1}{2\pi}\sum_{n=1}^{\infty}\left\{\int_{-\pi}^{\pi}f^{\mp}_{\alpha}(x-t)2\cos(nt)dt\right\}e^{-ns}\\
&=&\frac{1}{2\pi}\int_{-\pi}^{\pi}f^{^\mp}_{\alpha}(x-t)\left\{1+2\sum_{n=1}^{\infty}\cos(nt)e^{-ns}\right\}dt\\
&=&\frac{1-e^{-2s}}{2\pi}\int_{-\pi}^{\pi}\frac{f^{^\mp}_{\alpha}(x-t)dt}{1-2e^{-s}\cos t+e^{-2s}}\cdot
\end{eqnarray*}
The pair $\left\{\sum \dot{f}^{\mp}_ne^{-ns}\right\}(\alpha,x)$ satisfy the conditions of Theorem \ref{rep} for each $x\in\mathbb{R}, s>0$ and so represent the fuzzy valued function
\begin{eqnarray*}
\frac{1-e^{-2s}}{2\pi}\int_{-\pi}^{\pi}\frac{f(x-t)dt}{1-2e^{-s}\cos t+e^{-2s}}\cdot
\end{eqnarray*}
 Now we aim to show that
\begin{eqnarray*}
\lim_{s\to 0^+}\frac{1-e^{-2s}}{2\pi}\int_{-\pi}^{\pi}\frac{f(x-t)dt}{1-2e^{-s}\cos t+e^{-2s}}=f(x).
\end{eqnarray*}%
It will be convenient to utilize sequential criterion for limits in metric spaces and to use Theorem \ref{theorem1}. Let $(s_n)$ be a sequence in $\mathbb{R}^+$ that converges to $0$ and consider the sequence of fuzzy positive linear operators
\begin{eqnarray*}
L_n(f;x)=\frac{1-e^{-2s_n}}{2\pi}\int_{-\pi}^{\pi}\frac{f(x-t)dt}{1-2e^{-s_n}\cos t+e^{-2s_n}}\cdot
\end{eqnarray*}
Since
\begin{eqnarray*}
\widetilde{L}_n\left(f_{\alpha}^{\mp};x\right)=\frac{1-e^{-2s_n}}{2\pi}\int_{-\pi}^{\pi}\frac{f^{\mp}_{\alpha}(x-t)dt}{1-2e^{-s_n}\cos t+e^{-2s_n}}
\end{eqnarray*}
we have
\begin{eqnarray*}
\widetilde{L}_n(1;x)=1,\quad \widetilde{L}_n(\cos t;x)=e^{-s_n}\cos x,\quad \widetilde{L}_n(\sin t;x)=e^{-s_n}\sin x,
\end{eqnarray*}
and followingly we get
\begin{eqnarray*}
\left\Vert \widetilde{L}_n(f_0)-f_0\right\Vert=0, \quad \left\Vert\widetilde{L}_n(f_1)-f_1\right\Vert=1-e^{-s_n}\to 0, \quad \left\Vert\widetilde{L}_n(f_2)-f_2\right\Vert=1-e^{-s_n}\to 0.
\end{eqnarray*}
So by Theorem \ref{theorem1}, we have $L_n(f)\to f$. Since $(s_n)$ is arbitrary, we conclude
\begin{eqnarray}\label{operator}
\lim_{s\to 0^+}\frac{1-e^{-2s}}{2\pi}\int_{-\pi}^{\pi}\frac{f(x-t)dt}{1-2e^{-s}\cos t+e^{-2s}}=f(x).
\end{eqnarray}
Hence, fuzzification of level Fourier series \eqref{levelfourier} is recovered via $(A, n)$ means and convergence is achieved via \eqref{operator}.

On the other hand, by taking $r=e^{-s}$ above, Abel means of level Fourier series of $f$ represents the  fuzzy Abel-Poisson convolution operator\cite{yavuzbukres}
\begin{eqnarray*}
P_{r}(f;x)=\frac{1-{r}^2}{2\pi}\int_{-\pi}^{\pi}\frac{f(x-t)}{1-2{r}\cos t+{r}^2}{dt}
\end{eqnarray*}
and $\lim_{r\to1^-}P_{r}(f;x)=f(x)$ is achieved in view of \eqref{operator}. Fuzzy Abel-Poisson convolution operator satisfies conditions \eqref{poissonbound} and \eqref{poissonlimit} of the fuzzy Dirichlet problem(in the polar coordinates) on the unit disc
\begin{numcases}{}
\frac{\partial^2u}{\partial r^2}(r,x)+\frac{1}{r}\frac{\partial u}{\partial r}(r,x)+\frac{1}{r^2}\frac{\partial^2u}{\partial x^2}(r,x)=0, \quad x\in\mathbb{R},\  0<r<1 \label{Laplace}\\[2mm]
u(0,x)=\frac{1}{2\pi}\int_{-\pi}^{\pi}f(y)dy,\qquad\qquad\qquad \qquad \ \ \ x\in\mathbb{R}\label{poissonbound}\\[2mm]
\lim\limits_{r\to 1^-}u(r,x)=f(x),\quad f\in C_{2\pi}^{(\mathcal{F})}(\mathbb{R}).\label{poissonlimit}
\end{numcases}
It also satisfies fuzzy Laplace equation \eqref{Laplace} when the fuzzy derivatives are taken in the first form(see \cite[Theorem 5]{derivative}). However, we should emphasize that $u(r,x)=P_{r}(f;x)$ is a levelwise solution for \eqref{Laplace} since the first form fuzzy derivatives of $P_{r}(f;x)$ may not reveal a fuzzy valued function.
\section{Conclusion}
\noindent In current paper we have introduced a general summation method $(\phi)$ for series of fuzzy numbers via sequences of continuous functions and proved the regularity of $(\phi)$ method. We also obtained conditions which guarantee the convergence of a series of fuzzy numbers from its generalized Dirichlet and generalized factorial series. In $(A,\lambda)$ summability method if we choose $\lambda_n=n$ and use transformation $x=e^{-s}$, results for Abel summability of series of fuzzy numbers are obtained\cite[Theorem 12, Theorem14]{abel}. Similarly, if we choose $\phi_n(s)=\frac{1}{\Gamma(1+sn)}$ we obtain regularity result for Mittag-Leffler summation method for series of fuzzy numbers by Theorem \ref{regularity}. Besides, in $(A,\lambda)$ method  if we choose $\lambda_n=\ln n$ and $\lambda_n=n\ln n$($n\geq 1$) we get new results for summation of series of fuzzy numbers via ordinary Dirichlet series and via Lindel\"{o}f summation method, respectively.
\begin{theorem}
If series $\sum_{n=1}^{\infty} u_n$ of fuzzy numbers is summable to fuzzy number $\mu$ by ordinary Dirichlet series and $n\ln nD(u_n,\bar{0})=o(1)$, then $\sum u_n=\mu$.
\end{theorem}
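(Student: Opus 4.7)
The plan is to obtain this statement as a direct corollary of Theorem \ref{tauber1} by specializing the weight sequence to $\lambda_n = \ln n$ for $n\geq 1$ (reindexing so that $\lambda_1 = 0$ plays the role of the leading term $\lambda_0$ in the general $(A,\lambda)$ scheme). With this choice one has $e^{-\lambda_n s} = n^{-s}$, so the generalized Dirichlet series of the $(A,\lambda)$ method becomes exactly the ordinary Dirichlet series $\sum u_n/n^s$, and the summability hypothesis of the present theorem coincides with the summability hypothesis of Theorem \ref{tauber1}.

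The only substantive task is then to check that the growth condition $n\ln n\, D(u_n,\bar{0}) = o(1)$ implies the Tauberian condition $\frac{\lambda_n}{\lambda_n-\lambda_{n-1}}D(u_n,\bar{0}) = o(1)$ required by Theorem \ref{tauber1}. For this I would compute
\begin{equation*}
\lambda_n - \lambda_{n-1} = \ln n - \ln(n-1) = \ln\!\left(1+\tfrac{1}{n-1}\right)
\end{equation*}
and apply the elementary inequality $\ln(1+x)\geq x/(1+x)$ (valid for $x>0$) with $x = 1/(n-1)$ to conclude $\ln n - \ln(n-1) \geq 1/n$. This yields
\begin{equation*}
\frac{\lambda_n}{\lambda_n - \lambda_{n-1}} \leq n\ln n,
\end{equation*}
so the hypothesis $n\ln n\, D(u_n,\bar{0}) = o(1)$ immediately gives $\frac{\lambda_n}{\lambda_n-\lambda_{n-1}}D(u_n,\bar{0}) = o(1)$, and Theorem \ref{tauber1} delivers $\sum u_n = \mu$.

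I do not expect any serious obstacle: the argument is a one-line asymptotic estimate on the spacing of the sequence $(\ln n)$. The only points deserving attention are the indexing convention (the series starts at $n=1$ because $\ln 1 = 0$ takes the place of $\lambda_0$) and the remark that the simple bound $\ln(1+x)\geq x/(1+x)$ already captures the correct $1/n$ order for $\lambda_n - \lambda_{n-1}$, which is what makes the factor $n\ln n$ in the Tauberian condition sharp.
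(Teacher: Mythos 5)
Your proposal is correct and is exactly the route the paper intends: the theorem is stated in the Conclusion as a specialization of Theorem \ref{tauber1} to $\lambda_n=\ln n$ ($n\geq 1$), and your verification that $\lambda_n-\lambda_{n-1}=\ln\bigl(1+\tfrac{1}{n-1}\bigr)\geq \tfrac{1}{n}$, hence $\tfrac{\lambda_n}{\lambda_n-\lambda_{n-1}}\leq n\ln n$, is precisely the (omitted) computation needed to convert the hypothesis $n\ln n\,D(u_n,\bar{0})=o(1)$ into the Tauberian condition of that theorem.
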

\begin{theorem}
If series $\sum_{n=1}^{\infty} u_n$ of fuzzy numbers is summable by ordinary Dirichlet series and $n\ln nD(u_n,\bar{0})=O(1)$, then $(u_n)\in bs(F)$.
\end{theorem}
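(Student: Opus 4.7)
The plan is to derive this statement as an immediate specialization of the $O(1)$-version of the Tauberian theorem for $(A,\lambda)$-summability (the theorem stated right after Theorem \ref{tauber1}) to the choice $\lambda_n = \ln n$. First I would identify ``summability by ordinary Dirichlet series'' with $(A,\lambda)$-summability at $\lambda_n = \ln n$, since $u_n e^{-\lambda_n s} = u_n n^{-s}$, so that the hypothesis of the ambient theorem is met.

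The only substantive verification is that the abstract Tauberian condition $\frac{\lambda_n}{\lambda_n - \lambda_{n-1}} D(u_n,\bar{0}) = O(1)$ is implied by the concrete hypothesis $n \ln n\, D(u_n,\bar{0}) = O(1)$. With $\lambda_n = \ln n$,
$$\frac{\lambda_n}{\lambda_n - \lambda_{n-1}} = \frac{\ln n}{\ln\!\bigl(1 + \tfrac{1}{n-1}\bigr)},$$
and the elementary inequality $\ln(1+x) \geq x/(1+x)$ applied at $x = 1/(n-1)$ gives $\ln\!\bigl(1 + \tfrac{1}{n-1}\bigr) \geq 1/n$, whence $\frac{\lambda_n}{\lambda_n - \lambda_{n-1}} \leq n \ln n$. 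Multiplying by $D(u_n,\bar{0})$ shows that the abstract Tauberian hypothesis holds, so the $O(1)$-version of Theorem \ref{tauber1} yields $(u_n) \in bs(F)$.

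The only obstacle is notational rather than mathematical: since $\ln 1 = 0$, the strict chain $0 \leq \lambda_0 < \lambda_1 < \cdots$ required in the definition of $(A,\lambda)$-summability cannot literally begin at $n=1$. This is resolved either by relabelling (set $\lambda_{n-1} := \ln n$ for $n \geq 1$ and take $u_0 = \bar{0}$), or by observing that the Tauberian estimates in the proof of Theorem \ref{tauber1} are tail estimates and therefore unaffected by the first term of the series. No new inequality beyond the logarithmic one above is required.
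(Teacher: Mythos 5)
Your proposal is correct and matches the paper's intended argument exactly: the paper states this result as a corollary of the $O(1)$-version of Theorem \ref{tauber1} with the choice $\lambda_n=\ln n$ ($n\geq 1$), and your verification that $\frac{\ln n}{\ln(1+\frac{1}{n-1})}\leq n\ln n$ via $\ln(1+x)\geq \frac{x}{1+x}$ supplies precisely the comparison the paper leaves implicit. The indexing remark about $\ln 1=0$ is a reasonable and harmless bit of extra care.
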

\begin{theorem}
If series $\sum_{n=1}^{\infty} u_n$ of fuzzy numbers is Lindel\"{o}f summable to fuzzy number $\mu$ and $nD(u_n,\bar{0})=o(1)$, then $\sum u_n=\mu$.
\end{theorem}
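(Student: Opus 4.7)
The plan is to identify the Lindel\"of summation method with the special case of $(A,\lambda)$ summation corresponding to $\lambda_n=n\ln n$, as explicitly stated in the opening of the conclusion, and then to apply Theorem \ref{tauber1}. Under this identification the only remaining task is to verify that the given hypothesis $nD(u_n,\bar{0})=o(1)$ implies the Tauberian condition $\frac{\lambda_n}{\lambda_n-\lambda_{n-1}}D(u_n,\bar{0})=o(1)$ of Theorem \ref{tauber1} for this specific sequence $(\lambda_n)$.

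The main step is an asymptotic estimate for the gap $\lambda_n-\lambda_{n-1}=n\ln n-(n-1)\ln(n-1)$. The cleanest approach is via the mean value theorem applied to the function $x\mapsto x\ln x$, whose derivative equals $\ln x+1$; this yields $\lambda_n-\lambda_{n-1}=\ln c_n+1$ for some $c_n\in(n-1,n)$, and in particular
\begin{eqnarray*}
\lambda_n-\lambda_{n-1}\geq \ln(n-1)+1.
\end{eqnarray*}
Equivalently, one may expand $\ln(n-1)=\ln n-1/n+O(1/n^2)$ to obtain $\lambda_n-\lambda_{n-1}=\ln n+1+O(1/n)$ as $n\to\infty$.

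Combining these, one finds that for all sufficiently large $n$,
\begin{eqnarray*}
\frac{\lambda_n}{\lambda_n-\lambda_{n-1}}=\frac{n\ln n}{\ln n+1+O(1/n)}\leq \frac{n\ln n}{\ln n}=n,
\end{eqnarray*}
so that the hypothesis gives
\begin{eqnarray*}
\frac{\lambda_n}{\lambda_n-\lambda_{n-1}}D(u_n,\bar{0})\leq n\,D(u_n,\bar{0})=o(1).
\end{eqnarray*}
Theorem \ref{tauber1} then immediately yields $\sum u_n=\mu$. The argument is essentially a direct specialization of Theorem \ref{tauber1}, and the only point requiring care is the elementary asymptotic for $\lambda_n-\lambda_{n-1}$, which reduces to a single application of the mean value theorem (or a routine Taylor expansion of $\ln(1-1/n)$); no real obstacle is anticipated.
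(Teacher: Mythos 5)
Your proposal is correct and matches the paper's intended route: the paper presents this theorem as an unproved corollary of Theorem \ref{tauber1} obtained by taking $\lambda_n=n\ln n$ in the $(A,\lambda)$ method, and your verification that $\lambda_n-\lambda_{n-1}=\ln c_n+1\geq \ln n$ (so that $\lambda_n/(\lambda_n-\lambda_{n-1})\leq n$) correctly supplies the only detail the paper leaves implicit.
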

\begin{theorem}
If series $\sum_{n=1}^{\infty} u_n$ of fuzzy numbers is Lindel\"{o}f summable and $nD(u_n,\bar{0})=O(1)$, then $(u_n)\in bs(F)$.
\end{theorem}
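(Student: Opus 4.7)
The plan is to view this statement as the $O(1)$ companion of Theorem \ref{tauber1} specialized to the parameter sequence $\lambda_n = n\ln n$, which, as pointed out in the preceding discussion, identifies Lindel\"of summability with the $(A,\lambda)$ method. If I can show that for this choice the hypothesis $nD(u_n,\bar{0}) = O(1)$ matches, asymptotically, the Tauberian condition $\frac{\lambda_n}{\lambda_n - \lambda_{n-1}} D(u_n,\bar{0}) = O(1)$, then the conclusion $(u_n) \in bs(F)$ follows by a direct appeal to the $O(1)$ Tauberian theorem for $(A,\lambda)$ summability stated immediately after Theorem \ref{tauber1}.

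The substantive step is therefore the asymptotic analysis of $\lambda_n - \lambda_{n-1}$. First I would write
\begin{eqnarray*}
n\ln n - (n-1)\ln(n-1) \;=\; \ln n + (n-1)\ln\!\left(1 + \tfrac{1}{n-1}\right),
\end{eqnarray*}
and apply the standard expansion $\ln(1+x) = x + O(x^2)$ to obtain $\lambda_n - \lambda_{n-1} = \ln n + 1 + o(1)$ as $n \to \infty$. Consequently
\begin{eqnarray*}
\frac{\lambda_n}{\lambda_n - \lambda_{n-1}} \;=\; \frac{n\ln n}{\ln n + 1 + o(1)} \;\sim\; n,
\end{eqnarray*}
so the two boundedness conditions are equivalent and the hypotheses of the cited $(A,\lambda)$ Tauberian theorem are satisfied, yielding $(u_n) \in bs(F)$.

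The only minor bookkeeping concerns the initial indices, since $\lambda_1 = 1 \cdot \ln 1 = 0$ clashes with the strict monotonicity $0 \leq \lambda_0 < \lambda_1 < \cdots$ required in the definition of the $(A,\lambda)$ method; this is resolved by starting $\lambda_n = n\ln n$ from $n = 2$ and fixing $\lambda_0, \lambda_1$ by convention, absorbing the first few terms of $\sum u_n$ into a harmless finite initial segment. I do not anticipate any essential obstacle: the Taylor estimate above is routine, and the rest is a direct specialization of machinery already developed in Section 3.1.
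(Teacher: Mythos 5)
Your proposal is correct and is exactly the derivation the paper intends: the Lindel\"of theorem is stated as the specialization $\lambda_n=n\ln n$ of the $O(1)$ Tauberian theorem for $(A,\lambda)$ summability, and your computation $\lambda_n-\lambda_{n-1}=\ln n+1+o(1)$, hence $\lambda_n/(\lambda_n-\lambda_{n-1})\sim n$, is precisely the asymptotic equivalence of the two Tauberian conditions that the paper leaves implicit. Your worry about $\lambda_1=1\cdot\ln 1=0$ is moot, since the series starts at $n=1$ and the definition of the $(A,\lambda)$ method allows the first exponent to equal $0$ (only the subsequent ones must be strictly increasing), so no reindexing or absorption of initial terms is actually needed.
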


\end{document}